\definecolor{darkblue}{rgb}{0,0,0.6}
\newtheorem*{rep@theorem}{\rep@title}
\newcommand{\newreptheorem}[2]{%
\newenvironment{rep#1}[1]{%
 \def\rep@title{#2 \ref{##1}}%
 \begin{rep@theorem}}%
 {\end{rep@theorem}}}
\numberwithin{equation}{section}
\newtheorem{proposition}[equation]{Proposition}
\newtheorem{lemma}[equation]{Lemma}
\newtheorem{thmx}{Theorem}
\theoremstyle{definition}
\theoremstyle{remark}
\newtheorem{remark}[equation]{Remark}
\newtheorem*{remark*}{Remark}
\numberwithin{equation}{section}
\newcommand{\Z}{\mathbb{Z}}
\newcommand{\Id}{\operatorname{Id}}
\newcommand{\ol}{\overline}
\newcommand{\wt}{\widetilde}
\newcommand{\sm}{\setminus}
\newcommand{\Top}{\mathrm{Top}}
\newcommand{\bsm}{\left(\begin{smallmatrix}}
\newcommand{\esm}{\end{smallmatrix}\right)}
\LetLtxMacro\Oldfootnote\footnote
\begin{document}
\title[Spanning 3-discs for the trivial 2-link]{Spanning 3-discs in the 4-sphere pushed into the 5-disc}

\author{Mark Powell}
\address{School of  Mathematics and Statistics, University of Glasgow, United Kingdom}
\email{mark.powell@glasgow.ac.uk}

\def\subjclassname{\textup{2020} Mathematics Subject Classification}
\expandafter\let\csname subjclassname@1991\endcsname=\subjclassname
\subjclass{
57K40, 
57R40,   	
57R52,   	
57R67. 
}

\begin{abstract}
I prove that any two smooth collections of spanning 3-discs for the trivial 2-link in $S^4$ become smoothly isotopic rel.\ boundary after pushing them into $D^5$.
\end{abstract}
\maketitle

\section{Introduction}

An \emph{$m$-component 2-link} is a smooth submanifold of $S^4$ homeomorphic to a disjoint union $\sqcup^m S^2$.
Given an $m$-component 2-link $L$, a smoothly embedded collection of 3-discs $D_m \subseteq S^4$, with $D_m \cong \sqcup^m D^3$ and $\partial D_m = L$, is called a \emph{spanning 3-disc collection} for $L$.
If $L$ admits a spanning 3-disc collection, I say that $L$ is \emph{trivial}. Fix a standard trivial $m$-component 2-link $U_m \subseteq S^4$.

\begin{thmx}\label{thm}
  Let $D_m^0$ and $D_m^1$ be spanning 3-disc collections for the trivial 2-link $U_m$ in $S^4$. Then including $S^4 \subseteq D^5$, $D^0_m$ and $D^1_m$ become smoothly isotopic in $D^5$, relative to $U_m$.
\end{thmx}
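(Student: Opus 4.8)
The plan is to pass to the exterior of a spanning $3$-disc collection after pushing it into $D^5$, to identify that exterior with a standard model, and then to reassemble the identification into an ambient isotopy. First some reductions: after an ambient isotopy of $S^4$ fixing $U_m$ I may assume $D^0_m$ and $D^1_m$ coincide near $U_m$, and, fixing a collar $S^4\times[0,1]\hookrightarrow D^5$ of $\partial D^5$, I replace $D^i_m$ by its push-in $\hat D^i_m$ --- the graph of a smooth function $D^i_m\to[0,\tfrac14)$ vanishing exactly on $U_m$ --- a properly embedded $3$-disc collection in $D^5$ with boundary $U_m$. The isotopy class rel $U_m$ of $\hat D^i_m$ is independent of the choices, the push-in is isotopic rel $U_m$ to the original disc in $\partial D^5\subseteq D^5$, and an isotopy rel $U_m$ between the push-ins yields one between the original discs in $D^5$. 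It therefore suffices to show that for an \emph{arbitrary} spanning $3$-disc collection $D_m$ for $U_m$, the push-in $\hat D_m$ is smoothly isotopic rel $U_m$ to the push-in $\hat D^{\mathrm{std}}_m$ of the standard round spanning discs, since one can then route any two collections through the standard one.

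The core object is the exterior $X := D^5\smallsetminus\Int\nu(\hat D_m)$, a compact smooth $5$-manifold. A Mayer--Vietoris argument, using that $\hat D_m$ is a contractible submanifold with trivial (hence canonically trivialised) normal bundle and that $D^5$ is contractible, gives $H_*(X)\cong H_*(\bigvee^m S^1)$ and identifies $\partial X$ with $\#^m(S^3\times S^1)$ via a diffeomorphism that is in turn canonically identified with $\partial X^{\mathrm{std}}$, using the fixed data near $U_m$ and the framing. The crux --- and the step where I expect the real difficulty to lie, and where the hypothesis that $\hat D_m$ is a push-in of a \emph{spanning} disc collection in $\partial D^5$ is essential (a general null-homotopic $3$-disc in $D^5$ can have a knotted, non-free exterior) --- is to prove that $\pi_1(X)$ is the \emph{free} group $F_m$ on the meridians; equivalently, that the inclusion $E_m := S^4\smallsetminus\Int\nu(U_m)\hookrightarrow X$ is a homotopy equivalence. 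For this I would use that each component $\hat D_j$ cobounds, together with the boundary disc $D_j\subseteq\partial D^5$, a $4$-ball $R_j\subseteq D^5$ (the region between the graph and the boundary), and push an arbitrary loop off $\hat D_m$ using the $R_j$ and the collar, to see that $E_m\hookrightarrow X$ is $\pi_1$-surjective; combined with the homology computation and the Poincar\'e--Lefschetz constraints on a $5$-manifold group this should force $\pi_1(X)\cong F_m$ and hence $X\simeq E_m$ rel $E_m$, and the same for $X^{\mathrm{std}}$.

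Next I would show $X\cong X^{\mathrm{std}}$ rel boundary. Both are compact $5$-manifolds with identified boundary, homotopy equivalent rel boundary, with fundamental group the good (free) group $F_m$ --- so $\Wh(F_m)=0$ and $\widetilde K_0(\Z F_m)=0$ --- and in dimension $5$ smooth surgery, the Whitney trick, and the smooth $h$-cobordism theorem are available. A surgery and handle-cancellation argument (compare $X$ and $X^{\mathrm{std}}$ by a normal cobordism rel $\partial$ arising from the homotopy equivalence, surger below the middle dimension, and cancel the remaining handles using the homotopy equivalence and the vanishing of Whitehead torsion, the handle manoeuvres being arrangeable to stay in the range where the five-dimensional Whitney trick applies because both manifolds collapse onto $E_m$) then produces a diffeomorphism $\Phi_X\colon X\xrightarrow{\ \cong\ }X^{\mathrm{std}}$ restricting to the identification on $\partial X=\partial X^{\mathrm{std}}$. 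Extending $\Phi_X$ over the tubular neighbourhoods of $\hat D_m$ and $\hat D^{\mathrm{std}}_m$ by the canonical framing identification produces a diffeomorphism $\Phi\colon D^5\to D^5$ with $\Phi|_{\partial D^5}=\operatorname{id}$ and $\Phi(\hat D_m)=\hat D^{\mathrm{std}}_m$.

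Finally, $\Phi$ represents a class in $\pi_0\Diff(D^5,\partial D^5)\cong\Theta_6=0$, so $\Phi$ is smoothly isotopic rel $\partial D^5$ to the identity; this isotopy is an ambient isotopy of $D^5$ fixing $\partial D^5\supseteq U_m$ pointwise and carrying $\hat D_m$ onto $\hat D^{\mathrm{std}}_m$. Applying this to $D^0_m$ and to $D^1_m$ and concatenating (together with the initial push-in isotopies, which are rel $U_m$) proves the theorem. The principal obstacle throughout is the homotopical analysis of the exterior in the second paragraph; once $\pi_1(X)=F_m$ and $X\simeq E_m$ are established, the rest is fairly standard manifold topology in the good dimension $5$.
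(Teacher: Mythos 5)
Your overall architecture matches the paper's: pass to the exterior of the pushed-in discs, identify it with the model $\natural^m S^1\times D^4$ by a diffeomorphism controlled on the boundary, reglue, and finish with $\pi_0\Diff(D^5,\partial D^5)=0$. However, the two steps you yourself identify as the substance of the argument are exactly where your proposal has genuine gaps. First, the homotopy type of the exterior. Your plan is to show $E_m\hookrightarrow X$ is $\pi_1$-surjective and then let ``the homology computation and the Poincar\'e--Lefschetz constraints on a 5-manifold group'' force $\pi_1(X)\cong F_m$. This deduction is not valid: a group generated by $m$ elements with $H_1\cong\Z^m$ and $H_2=0$ need not be free (Stallings' theorem only gives an isomorphism on nilpotent quotients, and there are non-free parafree groups with exactly these invariants), and Poincar\'e--Lefschetz duality for a compact 5-manifold with boundary imposes no further constraint that rescues this. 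The fact you need is true, but it requires a geometric input: the paper obtains it by choosing the push-in so that the radial function restricted to $D^i_m$ is Morse with only index-zero critical points, whence the exterior has a handle decomposition with one 0-handle and $m$ 1-handles and is therefore diffeomorphic (uncontrolled on the boundary) to $\natural^m S^1\times D^4$. Some such argument using the specific geometry of a push-in is unavoidable; your regions $R_j$ are the right objects, but ``push loops off'' only gives normal generation/surjectivity, not freeness.

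Second, the step you call ``fairly standard manifold topology in the good dimension 5'' is the core of the paper and cannot be done by generic surgery-and-cancel. The relevant obstruction groups are nonzero: $L_5(\Z[F_m])\cong\oplus^m\Z$ and $L_6(\Z[F_m])\cong\Z/2$. Given the homotopy equivalence $f\colon X\to X^{\mathrm{std}}$ rel boundary, you first need its normal invariant to vanish before any normal cobordism to the identity exists; the paper proves this by showing $\mathcal{N}(X,\partial X)\to L_5(\Z[F_m])$ is injective, using Cappell's splitting theorem and Shaneson's computation to identify both sides with tuples of codimension-one signatures (plus the vanishing of $\pi_5(G/O)$). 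Then, having a normal cobordism, the middle-dimensional surgery kernel gives an obstruction in $L_6(\Z[F_m])\cong\Z/2$ which Whitehead-torsion considerations do not touch; the paper disposes of it by showing the Wall realisation action is trivial, realising the generator by the closed normal map $S^3\times S^3\to S^6$. That these computations are genuinely needed, and not formal, is illustrated by the Hughes--Kim--Miller examples: for genus $\ge 2$ spanning handlebodies the analogous uniqueness statement in $D^5$ is \emph{false}, so any correct proof must use something special about the $\pi_1\cong F_m$, $\chi$-type of this situation. As written, your proposal does not identify what that is. The reductions, the boundary bookkeeping, and the final appeal to $\Theta_6=0$ are all fine and agree with the paper.
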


\begin{remark}
   In the case that $m=1$, i.e.\ spanning 3-discs for the trivial 2-knot, this was proven by Hartman in \cite{H}. An alternative argument was given in the introduction to Hughes-Kim-Miller~\cite{HKM}.  The $m=1$ case of the proof of \cref{thm} gives another proof that is more direct.  The case of multiple connected components is new. I deduce it from an application of surgery theory.
\end{remark}

Since the surgery methods used in the proof have been available for some time, let me mention how this question arose in the modern context.
Budney--Gabai~\cite{BG} showed that there are spanning 3-discs for $U_1$ that are not isotopic rel.\ boundary in $S^4$, but which become isotopic in $D^5$.
The $m=1$ case of \cref{thm} showed that the Budney--Gabai examples are optimal in the sense that it is not possible to construct examples that remain distinct in $D^5$.

Alison Tatsuoka informs me that work in progress with Seungwon Kim and Gheehyun Nahm will produce spanning 3-disc collections for $U_m$, $m \geq 2$, that are not isotopic rel.\ boundary. Their collections are Brunnian, in the sense that every $(m-1)$-component sub-collection is isotopically standard.
In his PhD thesis, Weizhe Niu~\cite{Niu} independently developed alternative methods, extending the computations from~\cite{BG}, that can likely be applied to prove the same result; this is also work in progress of Niu.
These new collections can be seen directly to be standard in $D^5$; \cref{thm} shows that this must always be the case.

If I allow trivial surface links in $S^4$ with positive genus, there is no analogous result.  Hughes--Kim--Miller~\cite{HKM} showed that there are pairs of genus $g \geq 2$ handlebodies in $S^4$, with the same surface boundary, that are not isotopic rel.\ boundary, and remain non-isotopic rel.\ boundary after they are pushed into $D^5$.  The genus one case is open at the time of writing, but Kim--Nahm--Tatsuoka have announced similar examples in this case too.
Assuming their veracity, it follows that for every trivial surface link in $S^4$ that contains a surface of positive genus, there exist distinct pairs of spanning handlebodies that remain distinct rel.\ boundary in $D^5$.

Thus, Theorem~\ref{thm} is optimal: for the isotopy uniqueness result it espouses to hold, one must allow pushing the spanning 3-discs into the 5-disc, and one cannot increase the topological complexity beyond spherical 2-links.

\begin{remark}
  The proof I give of \cref{thm} is analogous to the proof that slice discs $D^2 \subseteq D^4$ for Alexander polynomial one knots are unique up to topological isotopy rel.\ boundary, which was proven in my work with Conway~\cite{CP}. Since \cref{thm} concerns one dimension up, the result holds in the smooth category. In addition, I can work here with free groups of arbitrary rank, whereas in the 4-dimensional case we needed to work in a setting where the fundamental group was good.
\end{remark}

\subsubsection*{Acknowledgements}

Thanks to Seungwon Kim and Maggie Miller for suggesting this question, and to both Anthony Conway and the anonymous referee for helpful comments on previous drafts.  I am very grateful to Sarah Blackwell and Alison Tatsuoka for informing me about potential applications and comparisons.
I was partially supported by EPSRC grants EP/T028335/1 and  EP/V04821X/1.


\section{Using surgery to compare pushed-in 3-disc exteriors}

All manifolds and embeddings are assumed to be smooth. Fix $m \geq 1$.
Push $D^0_m$ and $D^1_m$ into $D^5$, and by an abuse of notation denote these pushed in copies also by $D^0_m$ and $D^1_m$ respectively.  Write $W_i := D^5 \sm \nu D^i_m$ for $i=0,1$.

\begin{lemma}\label{lemma:DONM}
  For $i=0,1$, there is a degree one normal map of pairs $(f_i,b_i) \colon (W_i,\partial W_i) \to (\natural^m S^1 \times D^4, \#^m S^1 \times S^3)$ such that $f_i$ is a homotopy equivalence that restricts to a diffeomorphism on $\partial W_i$.
\end{lemma}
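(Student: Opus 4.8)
The plan is to prove that $W_i$ is homotopy equivalent to $\natural^m S^1\times D^4$ relative to a fixed identification of the two boundaries, and then to promote that homotopy equivalence to a degree one normal map almost for free. I would begin with the boundary. Taking the tubular neighbourhood so that $\nu D_m^i\cong D_m^i\times D^2$, one has $\partial W_i=(S^4\sm\nu U_m)\cup_{\sqcup^m S^2\times S^1}(\sqcup^m D^3\times S^1)$, which is exactly the manifold obtained by surgering $S^4$ along the components of $U_m$ with their standard framings. As $U_m$ is the trivial $2$-link this is $\#^m S^1\times S^3=\partial(\natural^m S^1\times D^4)$, so I fix such a diffeomorphism $g_i$; it depends only on a standard neighbourhood of $U_m$ in $S^4$, so it may be taken compatibly for $i=0,1$.

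Next I would pin down the homotopy type. The cofibre sequence $W_i\to D^5\to D^5/W_i$, together with the identification $D^5/W_i\simeq \nu D_m^i/(\nu D_m^i\cap W_i)\simeq\bigvee^m S^2$, gives $\Sigma W_i\simeq\bigvee^m S^2$, so $W_i$ has the integral homology of $\bigvee^m S^1$; and a van Kampen argument for $D^5=W_i\cup\nu D_m^i$ shows that the meridians $\mu_1,\dots,\mu_m$ of the components of $D_m^i$ normally generate $\pi_1(W_i)$. The essential step is to upgrade this — using that each component of $D_m^i$ is a $3$-\emph{ball} rather than an arbitrary spanning $3$-manifold — to the statements that $\pi_1(W_i)$ is \emph{freely} generated by the $\mu_j$ and that the universal cover $\widetilde W_i$ is contractible, so that $W_i\simeq K(F_m,1)\simeq\natural^m S^1\times D^4$. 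I would attempt this with a Seifert-hypersurface argument: realise a basis of $H^1(W_i;\Z)$ by disjoint, properly embedded, cooriented codimension-one submanifolds of $W_i$, and exploit that in ambient dimension $5$ the Whitney and compressing discs needed to simplify these hypersurfaces — and the manifold obtained by cutting $W_i$ along them — have codimension at least $3$, hence embed; this should exhibit $W_i$ as assembled from contractible pieces glued along contractible pieces, forcing the homotopy type of $\bigvee^m S^1$. Alternatively, once $\pi_1(W_i)\cong F_m$ is known, a Poincar\'e--Lefschetz duality bootstrap for $(W_i,\partial W_i)$ with $\Z[F_m]$-coefficients, fed the computation of $\partial W_i$, shows $H_*(\widetilde W_i;\Z)=H_*(\pt)$, whence $\widetilde W_i\simeq\pt$ by Hurewicz.

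With the homotopy type in hand, the normal map follows. Since $\natural^m S^1\times D^4$ is aspherical, the diffeomorphism $g_i$ extends to a map $f_i\colon W_i\to\natural^m S^1\times D^4$ — the only obstruction is to extending the homomorphism $(g_i)_*$ over $\pi_1(W_i)$, which is automatic because $\pi_1(\partial W_i)\to\pi_1(W_i)$ carries the free generating set of meridians to the free generating set of meridians and so is an isomorphism — and $f_i$ is then a $\pi_1$-isomorphism inducing an isomorphism on the homology of universal covers, hence a homotopy equivalence by Whitehead's theorem. By construction it restricts to the diffeomorphism $g_i$ on $\partial W_i$, and therefore has degree $\pm1$; orient $W_i$ so that the degree is $+1$. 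Finally, $W_i$ is a codimension-zero submanifold of $D^5\subseteq\R^5$ and $\natural^m S^1\times D^4$ is parallelisable, so both stable normal bundles are trivial; a stable bundle map $b_i$ covering $f_i$ and restricting over $\partial W_i$ to the bundle map induced by $g_i$ therefore exists, and $(f_i,b_i)$ is the required degree one normal map of pairs.

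The main obstacle is the middle paragraph — the passage from ``the $\mu_j$ normally generate $\pi_1(W_i)$'' to ``$\pi_1(W_i)\cong F_m$ and $W_i$ is aspherical''. This is not true for arbitrary spanning $3$-manifolds, and it is precisely the one-dimension-up analogue of the Alexander polynomial one hypothesis in the four-dimensional slice-disc argument of Conway and Powell~\cite{CP}; it is here that one uses both the hypothesis that the $D_m^i$ are discs and the fact that, the ambient dimension being $5$, the relevant surgeries and Whitney moves can actually be carried out.
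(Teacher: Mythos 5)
There is a genuine gap, and you have located it yourself: the middle paragraph never actually establishes that $\pi_1(W_i)\cong F_m$ and that $W_i$ is aspherical. The duality/cofibre computation only gives $H_*(W_i)\cong H_*(\vee^m S^1)$ and that the meridians normally generate $\pi_1(W_i)$, which is far from the homotopy type; the proposed ``Seifert-hypersurface'' simplification is a heuristic, not an argument (realising a basis of $H^1$ by disjoint hypersurfaces, compressing them, and controlling the cut-open pieces is exactly the kind of programme that fails for general spanning $3$-manifolds, and you give no mechanism that exploits disc-ness); and the Poincar\'e--Lefschetz bootstrap you offer as an alternative presupposes $\pi_1(W_i)\cong F_m$, which is the very point at issue. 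As written, the proof of the lemma's central assertion --- that $f_i$ is a homotopy equivalence --- is missing.

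The paper resolves this with a short Morse-theoretic observation that bypasses all of the above: since each component of $D^i_m$ is a $3$-ball, one may \emph{choose} the push-in so that the radial function of $D^5$ restricted to $D^i_m$ is Morse with exactly $m$ critical points, all of index zero (any two push-ins are isotopic rel.\ boundary, so this is no loss). The exterior then inherits a handle decomposition with a single $0$-handle and $m$ $1$-handles, so $W_i$ is \emph{diffeomorphic} (with no control on the boundary) to $\natural^m S^1\times D^4$, hence homotopy equivalent to $\vee^m S^1$; Whitehead's theorem then upgrades any $\pi_1$-isomorphism $W_i\to\vee^m S^1$ to a homotopy equivalence. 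This is precisely where the hypothesis that the $D^i_m$ are discs is used, and it replaces your entire middle paragraph. Your first and third paragraphs (the boundary identification and the passage from homotopy equivalence to degree one normal map via parallelisability) match the paper's argument and are fine, though note that the paper constructs the boundary identification $\varphi\colon\partial W_0\cong\partial W_1$ more carefully, as the restriction of a diffeomorphism $(S^4\times I)\cup\ol{\nu}D^0_m\cong (S^4\times I)\cup\ol{\nu}D^1_m$ that is the identity on $S^4$ and carries $D^0_m$ to $D^1_m$; this extra care is needed later when reassembling $D^5$ in the proof of Theorem~\ref{thm}.
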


\begin{proof}
   To begin, I construct a diffeomorphism $\varphi \colon \partial W_0 \xrightarrow{\cong} \partial W_1$, that restricts to the identity between the two copies of $S^4 \sm \nu U_m$ and to diffeomorphisms between the two copies of \[\sqcup^m D^3 \times S^1 \cong \partial W_i \sm (S^4 \sm \ol{\nu} U_m).\]
   Add an exterior collar $S^4 \times I$ to $\partial D^5$, such that $S^4 \times \{0\}$ is attached to $\partial D^5$.  Consider the 5-manifold
   \[Y_i := (S^4 \times I) \cup \ol{\nu} D^i_m \cong (S^4 \times I) \cup (\sqcup^m D^3 \times D^2),\]
    which has boundary $S^4 \sqcup \partial W_i$. Fix an identification $\ol{\nu} D^0_m \cong \ol{\nu} D^1_m$ that sends the 0-section to the 0-section and extends the identity on $\ol{\nu} D^0_m \cap S^4 \to \ol{\nu} D^1_m \cap S^4$.  Extending by the identity on $S^4 \times I$, this yields a diffeomorphism $Y_0 \cong Y_1$ that restricts to the identity on $S^4 \times \{0\}$ and sends $D^0_m$ to $D^1_m$. Restricting to the $\partial W_0$  boundary component yields the desired diffeomorphism $\varphi \colon \partial W_0 \cong \partial W_1$.

      I then choose compatible identifications of both $\partial W_0$ and $\partial W_1$ with $\#^m S^1 \times S^3$, for example by choosing one for $\partial W_1$ and then using $\varphi$ to obtain the identification for $\partial W_0$. (The careful construction of this diffeomorphism $\varphi$, and the fact that the identifications with $\#^m S^1 \times S^3$ are compatible,  will be used in \cref{sec:proof-thm-A}.)

  For each $i=0,1$, extend the diffeomorphism $\partial W_i \cong \#^m S^1 \times S^3$ to a diffeomorphism of collar neighbourhoods $\partial W_i \times [0,1]$ to $(\#^m S^1 \times S^3) \times [0,1]$.  Compose with the projection $(\#^m S^1 \times S^3) \times [0,1] \to \#^m S^1 \times S^3$, followed by a standard Pontryagin-Thom type map $\#^m S^1 \times S^3 \to \vee^m S^1$  to obtain a map $\partial W_i \times  [0,1] \to \vee^m S^1$. Using obstruction theory, extend this to a map $g \colon W_i \to \vee^m S^1$.  Here note that $\pi_j(\vee^m S^1)=0$ for $j >1$, so as long as I define the map to $\vee^m S^1$ correctly on the 1-cells, the rest of the obstruction theory proceeds without hindrance.

Next, the open collar neighbourhood $\partial W_i \times [0,1)$ maps to $(\natural^m S^1 \times D^4) \sm (\vee^m S^1)$ via a diffeomorphism.
Send the rest of $W_i$ to the core $\vee^m S^1$ using $g$. I shall argue that the resulting map $f_i \colon W_i \to \natural^m S^1 \times D^4$ is a homotopy equivalence. Note that $\pi_1(W_i) \cong F_m$, the free group of rank $m$. The map I have defined induces an isomorphism $\pi_1(W_i) \xrightarrow{\cong} \pi_1(\natural^m S^1\times D^4)$.

Push $D^i_m$ into $D^5$ such that the radial function restricted to $D^i_m$ is a Morse function with $m$ critical points, each of which has index zero. It follows that the exterior has a handle decomposition with a single 0-handle and $m$ 1-handles, and is therefore diffeomorphic (without any control on the diffeomorphism on the boundary) to $\natural^m S^1 \times D^4$. In particular $W_i$ is homotopy equivalent to $\vee^m S^1$, and so any map $W_i \to \vee^m S^1$ inducing an isomorphism on  $\pi_1$, such as the map $f_i$ under consideration, is a homotopy equivalence by Whitehead's theorem.

Since $\natural^m S^1 \times D^4$ and $W_i$ have trivial tangent bundles, our homotopy equivalence can be augmented with the necessary bundle data to obtain a normal map.  As $f_i$ restricts to a diffeomorphism $f_i| \colon \partial W_i \to \# S^1 \times S^3$, it is automatically degree one.
\end{proof}

I can now consider $f_i \colon W_i \to \natural^m S^1 \times D^4$ as an element of the
Browder-Novikov-Sullivan-Wall relative surgery structure set~\cite{W}, denoted  $\mathcal{S}(\natural^m S^1 \times D^4, \#^m S^1 \times S^3)$, which sits in the exact sequence (of abelian groups, in this case, since $\natural^m S^1 \times D^4 \cong D^2 \times \natural^m S^1 \times D^2$):
\begin{equation}\label{eqn:ses}
L_6(\Z[F_m]) \to \mathcal{S}(\natural^m S^1 \times D^4, \#^m S^1 \times S^3) \to \mathcal{N}(\natural^m S^1 \times D^4, \#^m S^1 \times S^3) \to L_5(\Z[F_m]).
\end{equation}
Note that since the Whitehead group of $F_m$ is trivial~\cite{St1}, I can ignore $h$ and $s$ decorations, and every $h$-cobordism is an $s$-cobordism. Let $\wt{L}_n(\Z[\pi])$ denote the reduced $L$-theory of $\Z[\pi]$, such that $L_n(\Z[\pi]) \cong L_n(\Z) \oplus \wt{L}_n(\Z[\pi])$.
Recall that $L_n(\Z)=0$ for $n$ odd, and for $k \geq 0$ one has $L_{4k}(\Z) \cong \Z$ and $L_{4k+2}(\Z) \cong \Z/2$.
I will use Cappell~\cite[Corollary~6]{Ca1,Ca2} and Shaneson's~\cite{Sh} calculations
\begin{align*}
  L_5(\Z[F_m]) &\cong L_5(\Z) \oplus \bigoplus^m \wt{L}_5(\Z[\Z]) \cong \oplus^m  L_4(\Z) \cong \oplus^m \Z; \\
L_6(\Z[F_m]) &\cong  L_6(\Z) \oplus \bigoplus^m \wt{L}_6(\Z[\Z]) \cong L_6(\Z) \oplus \bigoplus^m L_5(\Z) \cong  \Z/2.
\end{align*}

\begin{proposition}\label{prop:structure-set-trivial}
  The structure set  $\mathcal{S}(\natural^m S^1 \times D^4, \#^m S^1 \times S^3)$ is trivial, and hence the exteriors $W_0$ and $W_1$ are $s$-cobordant rel.\ boundary.
\end{proposition}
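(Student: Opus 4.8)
The plan is to show that both the normal invariant map and the $L_6$-action in the surgery exact sequence \eqref{eqn:ses} are as favorable as possible, so that the structure set is forced to be a single point. First I would compute the relative normal invariants $\mathcal{N}(\natural^m S^1 \times D^4, \#^m S^1 \times S^3)$. Since $\natural^m S^1 \times D^4 \simeq \vee^m S^1$ is homotopy equivalent to a $1$-complex, and normal invariants rel.\ boundary are given by $[\natural^m S^1 \times D^4 / \#^m S^1 \times S^3, G/\Top] \cong [\Sigma(\vee^m S^1)_+, G/\Top]$ or more directly by obstruction-theoretic reasoning with the bundle data, the relevant reduced cohomology of a wedge of circles (and its suspension) vanishes in the degrees where $\pi_*(G/\Top)$ is nonzero — $G/\Top$ has homotopy concentrated in degrees $\geq 2$. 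Hence I expect $\mathcal{N}(\natural^m S^1 \times D^4, \#^m S^1 \times S^3)$ to be trivial, which already makes the surgery obstruction map to $L_5(\Z[F_m])$ vacuous and identifies the structure set with a quotient of the image of the $L_6(\Z[F_m])$-action.

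Next I would analyze the action of $L_6(\Z[F_m]) \cong \Z/2$ on the structure set. By the computation quoted from Cappell and Shaneson, the reduced summands $\wt{L}_6(\Z[\Z])$ all vanish, so the only contribution is $L_6(\Z) \cong \Z/2$, the Arf/Kervaire summand. The standard fact I would invoke is that the image of $L_6(\Z)$ acting on the structure set $\mathcal{S}(\natural^m S^1 \times D^4, \#^m S^1 \times S^3)$ is trivial: the generator of $L_6(\Z)$ is realized by (connect-)summing with the Kervaire manifold, or more precisely by the action that changes a degree one normal map by a surgery problem supported in a disc, and since we are in the topological category on a manifold that already bounds and the normal invariant is trivial, this $\Z/2$ acts trivially. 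Equivalently, $L_6(\Z)$ acts on $\mathcal{S}(D^6, S^5)$, i.e.\ on $\mathcal{S}(D^2 \times \natural^m S^1 \times D^2$ rel.\ boundary$)$, and the Kervaire obstruction is detected already at the normal invariant level on a closed manifold, so it cannot move structures with trivial normal invariant. Therefore the whole of $L_6(\Z[F_m])$ acts trivially, and since it surjects onto $\mathcal{S}$ in the exact sequence, $\mathcal{S}(\natural^m S^1 \times D^4, \#^m S^1 \times S^3)$ is trivial.

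Finally, triviality of the structure set means precisely that the normal map $f_i \colon (W_i,\partial W_i) \to (\natural^m S^1 \times D^4, \#^m S^1 \times S^3)$ from \cref{lemma:DONM} is, rel.\ boundary, $s$-cobordant (equivalently normally cobordant over an $h$-cobordism, which is an $s$-cobordism since $\Wh(F_m)=0$) to the identity. Applying this to both $i=0$ and $i=1$ and composing the resulting $s$-cobordisms — matching them along $\partial W_0 \cong \partial W_1$ via the diffeomorphism $\varphi$ — yields an $s$-cobordism rel.\ boundary between $W_0$ and $W_1$, as claimed.

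I expect the main obstacle to be the second step: carefully justifying that the $\Z/2 \cong L_6(\Z)$ summand of $L_6(\Z[F_m])$ acts trivially on the structure set, rather than merely that it surjects onto it. One must be sure that, despite $L_6(\Z[F_m])$ mapping onto $\mathcal{S}$, the image is a single point — this requires knowing that the Kervaire element does not produce an exotic structure here, which follows because the target $\natural^m S^1 \times D^4$ splits off a $D^2$-factor (so the relevant surgery problem can be pushed into a disc where $L_6(\Z) = L_6(\Z[1])$ acts on $\mathcal{S}(D^6 \text{ rel } S^5)$, a one-point set) and because in the topological category the surgery sequence is exact with these identifications. The normal invariant computation in the first step, and the assembly of the two $s$-cobordisms in the last step, are routine by comparison.
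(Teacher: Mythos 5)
There is a genuine gap in your first step, and it is the heart of the matter. The rel.\ boundary normal invariant set is not $[X,G/O]$ but $[(X,\partial X),(G/O,*)] \cong [X/\partial X, G/O]$, and $X/\partial X$ is \emph{not} (stably) a suspension of $(\vee^m S^1)_+$: by Poincar\'e--Lefschetz duality $\wt{H}^k(X/\partial X;\Z) \cong H_{5-k}(X;\Z)$, so $X/\partial X$ has nontrivial cohomology in degrees $4$ and $5$ (namely $H^4(X,\partial X;\Z) \cong H_1(X;\Z) \cong \Z^m$ and $H^5 \cong \Z$). Since $\pi_4(G/O) \cong \Z$, one gets $\mathcal{N}(X,\partial X) \cong \Z^m \neq 0$; the homotopy type of $X$ alone controls $[X,G/O]$, not the rel.\ boundary invariants. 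Because $\mathcal{N}(X,\partial X)$ is not trivial, your conclusion that the image of $\mathcal{S}(X,\partial X) \to \mathcal{N}(X,\partial X)$ is automatically a point fails. What is actually needed --- and what the paper proves --- is that the surgery obstruction map $\mathcal{N}(X,\partial X) \to L_5(\Z[F_m]) \cong \oplus^m \Z$ is \emph{injective}, so that by exactness the only normal invariant realised by a structure is the trivial one. This is the content of \cref{lemma:surgery-obstrn-injective}, proved by splitting $X$ along the cocores $\{1\}\times D^4$ of the $1$-handles, identifying both the normal invariant (via the normal splitting theorem and $\mathcal{N}(D^4,S^3) \cong 16\Z$, together with $\pi_5(G/O)=0$) and the surgery obstruction (via Cappell's splitting theorem and Shaneson's $\wt{L}_5(\Z[\Z]) \cong L_4(\Z)$) with the tuple of codimension-one signatures $\big(\sigma(V^k)\big)_{k=1}^m$. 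Your proposal skips exactly this computation by asserting that there is nothing to compute.

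Your second step is essentially correct and matches the paper: the action of $L_6(\Z[F_m]) \cong L_6(\Z) \cong \Z/2$ on the structure set is trivial because the generator is realised by the surgery obstruction of a degree one normal map over $X \times I$, obtained by connected sum with the Kervaire problem $S^3 \times S^3 \to S^6$ (the standard criterion being that $\lambda \in L_{n+1}$ acts trivially if and only if it lies in the image of $\mathcal{N}(X\times I, \partial(X\times I)) \to L_{n+1}$). Two smaller points: your phrase ``since it surjects onto $\mathcal{S}$'' misstates the exact sequence --- $L_6$ acts on $\mathcal{S}$ with orbits the fibres of $\mathcal{S} \to \mathcal{N}$, it does not surject onto $\mathcal{S}$ --- and you should work with $G/O$ rather than $G/\Top$ throughout, since the final conclusion (a smooth $s$-cobordism, hence a diffeomorphism of exteriors) requires the smooth structure set. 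The assembly of the two structures into an $s$-cobordism between $W_0$ and $W_1$ at the end is fine.
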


I will prove \cref{prop:structure-set-trivial} in the next two lemmas.  Write
\[(X,\partial X) := (\natural^m S^1 \times D^4, \#^m S^1 \times S^3).\]

\begin{lemma}\label{lemma:surgery-obstrn-injective}
  The surgery obstruction map $\mathcal{N}(X,\partial X) \to L_5(\Z[F_m]) \cong \oplus^m \Z$ is injective.
\end{lemma}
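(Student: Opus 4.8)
The plan is to compute $\mathcal{N}(X,\partial X)$ directly, to recognise the surgery obstruction map as a ``restrict to the cocore discs'' map via a geometric splitting, and thereby to reduce injectivity to a single known fact about $\pi_4(\G/\OO)$. So first I would compute the normal invariants. By definition $\mathcal{N}(X,\partial X) \cong [X/\partial X, \G/\OO]$. Writing $X \cong D^2 \times H$ with $H := \natural^m S^1 \times D^2$ the genus $m$ handlebody, so that $\partial X = (S^1 \times H) \cup (D^2 \times \partial H)$, one has $X/\partial X \cong S^2 \wedge (H/\partial H)$. The space $H/\partial H$ is simply connected with reduced homology $\Z^m$ in degree $2$ and $\Z$ in degree $3$ (Lefschetz dual to $H^1(H)$ and $H^0(H)$), hence is homotopy equivalent to $\vee^m S^2 \vee S^3$, and so $X/\partial X \simeq \vee^m S^4 \vee S^5$. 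Since $\pi_5(\G/\OO) = 0$ and $\pi_4(\G/\OO) \cong \Z$, this gives $\mathcal{N}(X,\partial X) \cong \oplus^m \pi_4(\G/\OO) \cong \Z^m$. I would also record that, writing $X = D^5 \cup (m \text{ one-handles})$ and letting $P_1,\dots,P_m \subset X$ be the cocore $4$-discs (with $\partial P_j \subset \partial X$), the map $P_j/\partial P_j \simeq S^4 \to X/\partial X$ sends the fundamental class to the $j$-th generator of $H_4(X,\partial X) \cong \Z^m$, so it is homotopic to the inclusion of the $j$-th wedge summand; hence restriction of normal invariants along $P_j/\partial P_j$ is the projection of $\mathcal{N}(X,\partial X)$ onto the $j$-th copy of $\pi_4(\G/\OO)$.

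Next I would describe the surgery obstruction map geometrically. Cutting $X$ along $P_1 \sqcup \dots \sqcup P_m$ produces $D^5$, which is simply connected, so the decomposition $L_5(\Z[F_m]) \cong \oplus^m \wt{L}_5(\Z[\Z]) \cong \oplus^m L_4(\Z)$ of Cappell~\cite{Ca1,Ca2} and Shaneson~\cite{Sh} is realised geometrically by splitting along the $P_j$. Concretely: for a degree one normal map $(F,b)\colon (Z,\partial Z)\to(X,\partial X)$ restricting to a diffeomorphism on $\partial Z$, make $F$ transverse to $\bigsqcup_j P_j$; then each restriction $F|\colon (F^{-1}(P_j),\partial F^{-1}(P_j)) \to (P_j,\partial P_j)\cong(D^4,S^3)$ is a degree one normal map that is a diffeomorphism on its boundary, so its surgery obstruction lies in $L_4(\Z)$, and the $j$-th component of the surgery obstruction $\sigma(F)\in L_5(\Z[F_m])$ equals it. By naturality of normal invariants under transverse preimages, the normal invariant of $F^{-1}(P_j)\to P_j$ is the restriction along $P_j/\partial P_j$ of that of $F$, which by the previous paragraph is the $j$-th coordinate $c_j\in\pi_4(\G/\OO)$ of $[F]\in\mathcal{N}(X,\partial X)$.

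Combining these, the $j$-th component of $\sigma(F)$ is the image of $c_j$ under the surgery obstruction map $\pi_4(\G/\OO)\to L_4(\Z)$, which is injective: every class in $\pi_4(\G/\OO)=\mathcal{N}(S^4)$ is represented by a degree one normal map $M^4\to S^4$ with $M$ a closed smooth spin manifold (as $w_2(M)$ is pulled back from $H^2(S^4)=0$), so by Rokhlin's theorem its signature is divisible by $16$, and hence its surgery obstruction in $L_4(\Z)\cong\Z$ is even; since $\pi_4(\G/\OO)\cong\Z$ and a $K3$ surface realises $\pm 2$, the map $\pi_4(\G/\OO)\to L_4(\Z)$ is multiplication by $2$. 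Therefore $\sigma(F)=0$ forces $c_j=0$ for all $j$, so $[F]=0$ in $\mathcal{N}(X,\partial X)$, as required.

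I expect the main obstacle to be the second step: one must run the geometric codimension-one splitting theorem in the present relative setting, where the splitting hypersurfaces $P_j$ are discs with boundary in $\partial X$ and all maps are taken rel.\ boundary, and then match this splitting with the algebraic decomposition of $L_5(\Z[F_m])$ of Cappell and Shaneson. Verifying in the first step that the cocore discs hit the wedge summands of $X/\partial X$ compatibly also takes a little care.
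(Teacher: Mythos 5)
Your proposal is correct and follows essentially the same route as the paper: both identify $\mathcal{N}(X,\partial X)$ with $[(X,\partial X),(\G/\OO,*)]$, detect it by restriction to the $m$ cocore $4$-discs together with the vanishing of $\pi_5(\G/\OO)$ on the top cell, and then use the Cappell--Shaneson splitting to identify the surgery obstruction with the resulting tuple of codimension-one obstructions, reducing injectivity to the injectivity of $\pi_4(\G/\OO)\to L_4(\Z)$. The only cosmetic difference is that you make the computation $X/\partial X\simeq \vee^m S^4\vee S^5$ explicit, where the paper instead argues with the relative skeleton $\partial X\cup(\sqcup_k Y_k)$ and a final $5$-cell.
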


\begin{proof}
Let $f \colon W \to \natural^m S^1 \times D^4$, together with normal data $b$, represent an element of $\mathcal{N}(X,\partial X)$.
For each $k=1,\dots,m$, take a transverse inverse image of $\{1\} \times D^4$ in the $k$th boundary connected summand in $X=\natural^m S^1 \times D^4$. This gives a 4-manifold with boundary $S^3$, which I denote by $V^k$. The boundary is $S^3$ because $f|_{\partial W}$ is a diffeomorphism, and because $\partial V^k$ is the inverse image under~$f$ of $\partial(\{1\} \times D^4) = S^3$.
Then consider the signature $\sigma(V^k) \in \Z$, and use this to form a tuple $\big(\sigma(V^k)\big)_{k=1}^m \in \Z^m$.  This is preserved under normal bordism by cobordism invariance of the signature.
I claim that the degree one normal map $(f,b)$ is normally bordant to the identity map if $\sigma(V^k)=0$ for every $k=1,\dots,m$.

To see the claim I use the normal splitting theorem, \cite[Theorem~17.24]{LM}.
Recall that the set of normal bordism classes are in one to one correspondence with
$[(X,\partial X),(G/O,\ast)]$.
Let $Y_k \cong D^4$ be the $k$th $\{1\} \times D^4$ and let $i_k \colon Y_k \to X$ be the inclusion map. The normal splitting theorem gives a commuting square:
\[\begin{tikzcd}
    {[(X,\partial X),(G/O,*)]} \ar[d,"\cong"] \ar[r,"i_k^*"] & {[(Y_k,\partial Y_k),(G/O,*)]} \ar[d,"\cong"] \\
    \mathcal{N}(X,\partial X) \ar[r] & \mathcal{N}(Y_k,\partial Y_k)
  \end{tikzcd}\]
  where the top horizontal map is the restriction and the bottom horizontal map comes from transversality.
  Let $g \in [(X,\partial X),(G/O,*)]$ correspond to $(f,b) \in \mathcal{N}(X,\partial X)$ under the left vertical map.
Note that \[\mathcal{N}(Y_k,\partial Y_k) \cong \mathcal{N}(D^4,S^3) \cong [S^4,G/O] \cong 16\Z \hookrightarrow \Z,\] detected by the simply-connected surgery obstruction, which since $S^4$ has zero signature is the signature of the domain 4-manifold. So the bottom horizontal maps determine a map $\mathcal{N}(X,\partial X) \to \oplus_{k=1}^m \Z$ given by the collection of codimension one signatures.
If they all vanish, the  image of $(f,b)$ in $\mathcal{N}(Y_k,\partial Y_k)$ is trivial for all $k$, and hence by commutativity $i_k^*(g) \in [(Y_k,\partial Y_k),(G/O,*)]$ is null-homotopic for each~$k$. This yields a homotopy of~$g$ to a map $g'$ that is constant on the relative 4-skeleton $\partial X \cup (\sqcup_{k} Y_k)$ of $X$. This null-homotopy extends across the rest of $X$, which is a 5-cell, since  $\pi_5(G/O)=0$. 
Thus $g'$, and hence $g$, is homotopic rel.\ $\partial X$ to a constant map, meaning $(f,b)$ is  normally bordant rel.\ boundary to the identity.

Here, $\pi_5(G/O)$ can be computed using the long exact sequence
\[\cdots \to \pi_5(O) \to \pi_5(G) \to \pi_5(G/O) \to \pi_{4}(O) \to \pi_{4}(G) \to \cdots\]
arising from the fibration $G/O \to BO \to BG$ and the shifts $\pi_n(BG) \cong \pi_{n-1}(G)$ and $\pi_n(BO) \cong \pi_n(O)$. I know  $\pi_4(O) =0$ by Bott periodicity~\cite{Bo}, and also that $\pi_5(G) = \pi_5^s =0$.  Thus  $\pi_5(G/O) =0$ as asserted.
This completes the proof of the claim.

On the other hand the surgery obstruction group is $L_5(\Z[F_m]) \cong \oplus^m \wt{L}_5(\Z[\Z]) \cong \oplus^m L_4(\Z) \cong \oplus^m \Z$. I argue that the image of a degree one normal map under $\mathcal{N}(X,\partial X) \to L_5(\Z[F_m]) \xrightarrow{\cong} \oplus^m \Z $ is $\big(\sigma(V^k)\big)_{k=1}^m$.
Cappell~\cite{Ca1,Ca2} showed, using that $F_m$ has no order two elements, that there is a normal bordism to a degree one normal map where the domain is a boundary connected sum and the normal map splits as a boundary connected sum of normal maps $f_k \colon W_k \to S^1 \times D^4$, $k=1,\dots,m$. The surgery obstruction is then determined by the surgery obstructions of the $(W_k,f_k)$ in $\wt{L}_5(\Z[\Z])$, geometrically realising the isomorphism $L_5(\Z[F_m]) \cong \bigoplus^m \wt{L}_5(\Z[\Z])$.  Shaneson~\cite{Sh} showed that $\wt{L}_5(\Z[\Z]) \cong L_4(\Z)$, and since $L_4(\Z) \cong 8\Z$ he deduced that the surgery obstruction is given by a codimension one signature; in this case for each $k$ the obstruction is given by $\sigma(V_k)$ as above.

Thus if the surgery obstruction is zero, $\sigma(V^k)=0$ for $k=1,\dots,m$, and so $(f,b)$ is trivial in $\mathcal{N}(X,\partial X)$. Thus the surgery obstruction map  is injective as claimed.
\end{proof}

\begin{remark}
  I have presented a proof of \cref{lemma:surgery-obstrn-injective} via the splitting principle for surgery obstructions pioneered by Cappell and Shaneson.  However, the lemma can also be understood from a more general viewpoint, using that the surgery obstruction map factors as follows~\cite[Proposition~19.7]{LM}, where $\mathcal{N}^{\mathrm{Cat}}(X,\partial X)$ denotes the $\mathrm{Cat}$ normal invariants for $\mathrm{Cat} \in \{\mathrm{Diff},\mathrm{Top}\}$, and $\mathbb{L}$ is the Quinn-Ranicki spectrum. \[\mathcal{N}^{\mathrm{Diff}}(X,\partial X) \rightarrowtail \mathcal{N}^{\mathrm{Top}}(X,\partial X) \xrightarrow{\cong} \mathbb{L}\langle 1 \rangle_5(X) \xrightarrow{\cong} \mathbb{L}_5(X) \xrightarrow{\cong} \mathbb{L}_5(BF_m) \xrightarrow{\cong} L_5(\Z[F_m]).\]
\begin{enumerate}[(i)]
  \item The first map is injective because the kernel is $[(X,\partial X),(\mathrm{Top}/O,*)] \cong H^3(X,\partial X;\Z/2) \cong H_2(X;\Z/2) =0$.
Here I use that there is a fibration $\Top/O \to G/O \to G/\Top$, that $X$ is 5-dimensional, and that there is a 7-connected map $\Top/O \to K(\Z/2,3)$.
\item The identification $\mathcal{N}^{\mathrm{Top}}(X,\partial X) \xrightarrow{\cong} \mathbb{L}\langle 1 \rangle_5(X)$ is part of the general theory of the Quinn-Ranicki $\mathbb{L}$-spectrum. I refer to \cite[Proposition~19.7]{LM} for further citations.
  \item The passage $\mathbb{L}\langle 1 \rangle_5(X) \xrightarrow{\cong} \mathbb{L}_5(X)$ from the $[1,\infty]$ truncation of $\mathbb{L}$ to the full $\mathbb{L}$-spectrum can be computed to be an isomorphism using the Atiyah--Hirzebruch spectral sequence and the observation that for $R$ an abelian group $H_p(X;R)=0$ for $p \geq 2$. 
  \item  The next map $\mathbb{L}_5(X) \xrightarrow{\cong} \mathbb{L}_5(BF_m)$ is an isomorphism because it is induced by a homotopy equivalence $X \simeq BF_m$.
  \item The assembly map $\mathbb{L}_5(BF_m) \xrightarrow{\cong} L_5(\Z[F_m])$ is known to be an isomorphism because $F_m$ is torsion-free and satisfies the Farrell-Jones conjecture (see e.g.~\cite[Theorem~16.44]{LM}).
\end{enumerate}
\end{remark}

\begin{lemma}
  The action of $L_6(\Z[F_m]) \cong  \Z/2$ on  $\mathcal{S}(X,\partial X)$ is trivial.
\end{lemma}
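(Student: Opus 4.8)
The plan is to combine Wall realization with the observation that $L_6(\Z[F_m])$ is concentrated in the summand pulled back from the trivial group. By the surgery exact sequence \eqref{eqn:ses} and \cref{lemma:surgery-obstrn-injective}, the map $\mathcal{S}(X,\partial X)\to\mathcal{N}(X,\partial X)$ is the constant map to the basepoint, so $\mathcal{S}(X,\partial X)$ is a single orbit of the $L_6(\Z[F_m])$-action on $[\id_X]$. Hence it suffices to show that the generator of $L_6(\Z[F_m])\cong\Z/2$ fixes $[\id_X]$: once this is established, $\mathcal{S}(X,\partial X)$ is a single point, so the action is automatically trivial (and this simultaneously completes the proof of \cref{prop:structure-set-trivial}).

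The first step is to identify the generator. By the Cappell--Shaneson computation recalled above, $L_6(\Z[F_m])\cong L_6(\Z)\oplus\bigoplus^m\wt{L}_6(\Z[\Z])$, where each reduced summand $\wt{L}_6(\Z[\Z])\cong L_5(\Z)$ vanishes, and the remaining summand $L_6(\Z)$ is the image of the map $j_*\colon L_6(\Z)\to L_6(\Z[F_m])$ induced by the unital ring inclusion $\Z\hookrightarrow\Z[F_m]$ (split by the augmentation). Thus $L_6(\Z[F_m])$ is generated by $j_*(\theta)$, where $\theta$ is the Arf-invariant generator of $L_6(\Z)\cong\Z/2$.

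The second step is to realize $j_*(\theta)$ geometrically inside an embedded disc $D^5\subseteq\Int(X)$. Because $\theta$ comes from $L_6(\Z)$, Wall realization \cite{W} of $j_*(\theta)$ acting on $[\id_X]$ can be carried out by attaching handles to $X\times I$ within $D^5\times I$: this produces a normal cobordism $(U;X,X')$ rel $\partial X\times I$ which restricts over $D^5\times I$ to a normal cobordism $(V;D^5,\Delta^5)$ rel $S^4\times I$ of surgery obstruction $\theta$, and agrees with the product $(X\sm\Int D^5)\times I$ elsewhere. Consequently $X'=(X\sm\Int D^5)\cup_{S^4}\Delta^5$, and the homotopy equivalence $X'\to X$ representing $j_*(\theta)\cdot[\id_X]$ is the identity off $\Delta^5$ and restricts to a homotopy equivalence $h\colon\Delta^5\to D^5$ rel $S^4$ representing $\theta\cdot[\id_{D^5}]\in\mathcal{S}(D^5,S^4)$. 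But $\mathcal{S}(D^5,S^4)\cong\Theta_5=0$ by Kervaire--Milnor --- or directly from the surgery exact sequence for $(D^5,S^4)$, using $\mathcal{N}(D^5,S^4)\cong\pi_5(G/O)=0$ (computed in the proof of \cref{lemma:surgery-obstrn-injective}) together with $L_5(\Z)=0$. Hence $\Delta^5\cong D^5$ rel $S^4$ and $h$ is homotopic rel $S^4$ to a diffeomorphism; re-inserting this diffeomorphism into $X$ in place of $D^5$ shows $X'\cong X$ rel $\partial X$ and that the homotopy equivalence $X'\to X$ is homotopic rel $\partial X$ to a diffeomorphism. Therefore $j_*(\theta)\cdot[\id_X]=[\id_X]$, as required.

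I expect the point requiring the most care to be the locality of Wall realization in the second step: that an $L_6$-class pulled back from the trivial group along $\Z\hookrightarrow\Z[F_m]$ can be realized by handle attachments confined to a coordinate disc, and that doing so computes the action on $[\id_X]$ in terms of the action on $[\id_{D^5}]$ pushed forward by the disc-insertion map $\mathcal{S}(D^5,S^4)\to\mathcal{S}(X,\partial X)$. Equivalently, one must check the compatibility $a_X\circ j_* = \iota\circ a_{D^5}$ of the two action maps $a_\bullet$ on the respective basepoints, where $\iota$ is disc insertion. This is standard --- essentially naturality of the surgery exact sequence under codimension-zero inclusions into the interior --- but it is where one must keep track of decorations and basepoint conventions.
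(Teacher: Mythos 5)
Your proof is correct in outline, but it takes a genuinely different route from the paper. The paper observes that, by exactness of the extended surgery sequence, the action of $L_6(\Z[F_m])$ is trivial precisely when $\mathcal{N}(X \times I, \partial(X\times I)) \to L_6(\Z[F_m]) \cong \Z/2$ is onto, and then realises the generator in one step by taking the connected sum of $\id_{X\times I}$ with the Kervaire/Arf normal map $(F,B)\colon S^3\times S^3 \to S^6$; since this normal cobordism goes from $\id_X$ to $\id_X$, there is nothing further to check. You instead first use \cref{lemma:surgery-obstrn-injective} and \eqref{eqn:ses} to reduce to showing that the generator fixes $[\id_X]$, then localise Wall realisation of $j_*(\theta)$ to a disc $D^5 \subseteq \Int X$ and conclude via $\mathcal{S}(D^5,S^4)\cong\Theta_5=0$. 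What the paper's route buys is that it sidesteps entirely the "locality of Wall realisation" compatibility you correctly flag as the delicate point (it is true, essentially because the surgery kernel of the glued cobordism $V \cup (X\sm \Int D^5)\times I$ is concentrated over $V$ and the quadratic form is just extended along $\Z \to \Z[F_m]$, but it does need to be said); your route buys a conceptual explanation — the obstruction lives in the image of $L_6(\Z)$, so everything happens in a coordinate disc — at the cost of importing $\Theta_5=0$ from Kervaire--Milnor. Two small caveats. First, your parenthetical "or directly from the surgery exact sequence for $(D^5,S^4)$" is incomplete: $\mathcal{N}(D^5,S^4)=0$ and $L_5(\Z)=0$ only show that $\mathcal{S}(D^5,S^4)$ is a single orbit of $L_6(\Z)$; to conclude it is a point you still need surjectivity of $\pi_6(G/O)\to L_6(\Z)$, which is exactly the realisation of the Arf class by $(F,B)$ that the paper uses — so that alternative is circular, and you should rely on the Kervaire--Milnor citation (or just realise the Arf class as the paper does). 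Second, note that once you have arranged the realisation as a normal cobordism from $\id_X$ to $\id_X$ (which the connected-sum model achieves on the nose, with $\Delta^5 = D^5$), the detour through $\mathcal{S}(D^5,S^4)$ is unnecessary; it is only needed if you insist on performing genuine handle attachments inside $D^5\times I$.
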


\begin{proof}
The Wall realisation action is trivial if and only if the surgery obstruction map $\mathcal{N}(X \times I,\partial (X \times I)) \to L_6(\Z[F_m]) \cong L_6(\Z) \cong \Z/2$ is surjective.
The nontrivial element of $L_6(\Z)$ is realised by a degree one normal map $(F,B) \colon S^3 \times S^3 \to S^6$~\cite[Proposition~8.181~and~Exercise~8.186]{LM}. So, starting with the identity degree one normal map over $\natural^m S^1 \times D^4 \times I$, take the connected sum in domain and codomain with $(F,B)$, to obtain a degree one normal map with surgery obstruction the generator of $L_6(\Z[F_m]) \cong \Z/2$.
\end{proof}

\begin{proof}[Proof of Proposition~\ref{prop:structure-set-trivial}]
  The first part  now follows from the previous two lemmas and exactness of the surgery sequence \eqref{eqn:ses}. For the second part of Proposition~\ref{prop:structure-set-trivial}, Lemma~\ref{lemma:DONM} implies that the exteriors $W_i := D^5 \sm \nu D^i_m$ fit into elements $f_i \colon W_i \xrightarrow{\simeq} \natural^m S^1 \times D^4$ of the relative structure set $\mathcal{S}(X,\partial X)$. Since that set is a singleton,   the exteriors $W_0$ and $W_1$ are $s$-cobordant rel.\ boundary.
\end{proof}

\section{Proof of Theorem~\ref{thm}}\label{sec:proof-thm-A}

By the $s$-cobordism theorem~\cite{Sm,Mi,Ba,Ma,St2} there is a diffeomorphism $W_0 \cong W_1$ that restricts to the composite $\partial W_0 \cong \#^m S^1 \times S^3 \cong \partial W_1$, which is  the diffeomorphism $\varphi$ constructed in the proof of Lemma~\ref{lemma:DONM}. The boundary of $W_i \subseteq D^5$ splits as
\[\partial W_i
\cong S^4 \sm \nu U_m \cup \bigsqcup^m D^3 \times S^1.\]
 Glue in $\sqcup^m D^3 \times D^2$ along $\sqcup^m D^3 \times S^1 \subseteq \partial W_i$. This recovers $D^5$, with $\sqcup^m D^3 \times \{0\}$ sent to $D^i_m$.

Extend the diffeomorphism $W_0 \cong W_1$ across $\sqcup^m D^3 \times D^2$, to obtain a diffeomorphism $\Psi \colon D^5 \to D^5$.
The original identification $\varphi\colon \partial W_0 \cong \partial W_1$ from the proof of Lemma~\ref{lemma:DONM} was obtained by restricting a diffeomorphism
$S^4 \times I \cup \ol{\nu} D^0_m \xrightarrow{\cong} S^4 \times I \cup \ol{\nu} D^1_m$ that restricts to the identity on $S^4 \times \{0\}$ and maps $D^0_m$ to $D^1_m$. Hence one can arrange that $\Psi$ restricts to the identity on $\partial D^5 = S^4$ and maps $D^0_m$ to $D^1_m$.

Now, every diffeomorphism of the 5-disc that is the identity on $S^4$ is isotopic rel.\ boundary to the identity (by \cite{Ce} and \cite[Corollary~VIII.5.6]{K}, this is equivalent to there being no exotic 6-spheres~\cite{KM}). Thus  $\Psi$ is isotopic rel.\ boundary to the identity. The resulting isotopy $\Phi_t \colon D^5 \to D^5$ satisfies that $\Phi_0 = \Psi$, $\Phi_t|_{S^4} = \Id_{S^4}$ for all $t \in [0,1]$, and $\Phi_1 = \Id_{D_5}$.  Therefore $\Phi_0(D^0_m) = \Psi(D^0_m) = D^1_m$, while $\Phi_1(D^0_m) = \Id(D^0_m) = D^0_m$.  It follows that $D^t_m := \Phi_t(D^0_m)$ is a 1-parameter smooth family of collections of smoothly embedded 3-discs, with boundary $U_m$, interpolating between $D^1_m$ and $D^0_m$. Thus $D^0_m$ and $D^1_m$ are smoothly isotopic rel.\ $U_m$.

\begin{remark}
It was not necessary to apply the fact that the rel.\ boundary smooth mapping class group of~$D^5$ is trivial. If I pretend I do not know this, instead compose the given diffeomorphism $\Psi$ with a map isotopic to $\Psi^{-1}$, but with the inverse shrunk down to be supported in a small $D^5$ away from $\cup D^i_m$. This composite is isotopic to the identity, and again the isotopy carries $D^1_m$ to~$D^0_m$.
  \end{remark}


\end{document}